\theoremstyle{thmstyleone}%
\newtheorem{theorem}{Theorem}
\newtheorem{corollary}[theorem]{Corollary}
\newtheorem{proposition}[theorem]{Proposition}%
\theoremstyle{thmstyletwo}%
\newtheorem{example}{Example}%
\newtheorem{remark}{Remark}%
\theoremstyle{thmstylethree}%
\begin{document}

\title[$k$-Power Graphs of Finite Groups]{$k$-Power Graphs of Finite Groups}

\author*[1]{\fnm{Swathi} \sur{V V}}\email{swathivv14@gmail.com }

\author[1]{\fnm{M S} \sur{Sunitha}}\email{sunitha@nitc.ac.in}
\affil*[1]{\orgdiv{Department of Mathematics}, \orgname{National Institute of Technology Calicut}, \orgaddress{ \city{Calicut}, \postcode{673601}, \state{Kerala}, \country{India}}}

\abstract{For a finite group $G$ and for a fixed positive integer $k$, $k\geq 2$, the $k$-power graph of $G$ is an undirected simple graph with vertex set $G$ in which two distinct vertices $x$ and $y$ are adjacent if and only if $x^k=y$ or $y^k=x$. In this paper, we investigate some graph parameters such as number of edges, clique number, connectedness, etc. of $k$-power graphs of finite groups. Also find some properties of $k$-power graphs of finite cyclic groups, and finally we present an application.}

\keywords{$k$-power graphs, Power graphs, Graphs of groups.}

\maketitle

\section{Introduction}\label{sec1}
The directed power graph $\overrightarrow P(S)$ of a semigroup $S$ was defined by Kelarev and Quinn \cite{kelarev2000combinatorial}, as a digraph with vertex set $S$ in which there is an arc from a vertex $x$ to another vertex $y$ if and only if $y=x^n$ for some positive integer $n$.
Following this, the (undirected) power graph $P(S)$ of a semigroup $S$ was defined by Chakrabarty et al.\cite{chakrabarty2009undirected} as an undirected simple graph with vertex set $S$ in which two distinct vertices $x$ and $y$ are adjacent if and only if $x^n=y$ or $y^n=x$ for some positive integer $n$. The authors proved that power graph of a finite group $G$ is complete if and only if $G$ is cyclic of order $1$ or $p^m$ where $p$ is a prime and $m$ is a positive integer.
\medskip

We find a number of research publications where researchers describe various graph parameters for power graphs completely or in part. \cite{abawajy2013power,cameron2011power, cameron2019power,cameron2020connectivity,cameron2022matching,chattopadhyay2019vertex,chelvam2018power,doostabadi2015connectivity,ma2015chromatic} are some of the literatures which discuss numerous properties of power graphs of finite groups.

For fixed positive integer $k$ with $k\geq 2$ and for a semigroup $S$, Sriparna Chattopadhyay and Pratima Panigrahi \cite{chattopadhyay2017some} defined the $k-$power graph $P(S,k)$ as a graph with vertex set $S$ in which two distinct vertices $x$ and $y$ are adjacent if and only if $x^k=y$ or $y^k=x$. They studied cycle structures, connectedness
and symmetry of $k-$power graphs of cyclic groups. Clearly, for every positive integer $k\geq2$, $P(S,k)$ is a spanning subgraph of $P(S)$. 

In \cite{swathi2021square} the authors defined square graph of a finite group which is a particular $k$-power graph for $k=2$, and they investigated upper bounds for clique number and chromatic number. 

In this paper, notation and terminology of \cite{herstein2006topics} for graphs, \cite{chartrand2006introduction} for groups and \cite{nathanson2008elementary} for number theory are followed.

We denote the order of a group $G$ by $o(G)$ and order of an element $a$ in $G$ by $o(a)$. The identity element in $G$ is denoted by $e$. Also, a cyclic group of order $n$ is denoted by $\mathbb{Z}_n$.

The order of a graph $H$ is the number of vertices in that graph. We denote $deg(x)$ as the degree of the vertex $x$, which is the number of edges incident to $x$. The greatest distance between any two vertices of a connected graph $H$ is called the diameter of $H$ and is denoted by $diam(H)$. The complete graph of order $n$ is denoted by $K_n$ and a path of order $n$ by $P_n$. We denote $x\sim y$ if the vertices $x$ and $y$ are adjacent in the graph.

The greatest common divisor of two integers $m$ and $n$ are denoted as $gcd(m,n)$. For a positive integer $n>1$ and for an integer $a$ such that $gcd(a,n)=1$, the order of $a$ modulo $n$ is defined as the least positive integer $k$ such that $a^k\equiv 1(mod\, n)$, which is denoted by $ord_n(a)$. If $a$ has order $\phi(n)$, where $\phi(m)$ is the Euler totient function of $m$, then $a$ is called the primitive root modulo $n$.

In this paper we find some structural properties of $k$-power graph of a general finite group $G$ of order $n$ and $2\leq k \leq n$. Also we prove some properties of $P(\mathbb{Z}_n,k)$ and give an application also.
\section{$k$-power graphs}

Let $G$ be a finite group such that $o(G)=n$ and $P(G,k)$ be the $k$-power graph of $G$. For any two integers $k_1,k_2\geq 2,$ if $k_1\equiv k_2(mod \, n)$ then clearly $P(G,k_1)=P(G,k_2)$. We begin this section by noting that for a finite group $G$ of order $n$, the edges in $P(G,k)$ is at most $n-1$, since only the $k-$th power of each element is considered. We calculate the number of edges in $P(G,k)$ in the following theorem.
\begin{theorem}
Let $G$ be a finite group group of order $n$. Then the number of edges in the $k-$power graph of $G$ is given by 
$\mid E(P(G,k))\mid=n-\sum_{d\mid k_1} t_d-\sum_{d\mid k_2,d\nmid k_1}\frac{t_d}{2}$, where $k_1=gcd(k-1,n), k_2=gcd(k^2-1,n)$ and $t_d=$number of elements of order $d$ in $G$.
\end{theorem}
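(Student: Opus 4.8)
The plan is to reinterpret $P(G,k)$ through the $k$-th power map $f\colon G\to G$, $f(x)=x^k$, and to count edges by first counting the arcs of its functional digraph and then correcting for the arcs that collapse to a single undirected edge. First I would form the digraph on $G$ with an arc $x\to f(x)$ for every $x$ and delete the self-loops. By the definition of adjacency ($x^k=y$ or $y^k=x$), the simple underlying graph of this digraph is exactly $P(G,k)$: an unordered pair $\{x,y\}$ with $x\neq y$ is an edge precisely when there is an arc $x\to y$ or $y\to x$.

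Since each vertex has out-degree one, the number of arcs remaining after deleting self-loops equals $n$ minus the number of fixed points of $f$. A fixed point satisfies $x^k=x$, i.e. $x^{k-1}=e$, i.e. $o(x)\mid k-1$; because $o(x)\mid n$ always holds, this is equivalent to $o(x)\mid k_1$ with $k_1=\gcd(k-1,n)$. Hence the number of fixed points is $\sum_{d\mid k_1}t_d$, and the arc count is $n-\sum_{d\mid k_1}t_d$. Next I would pass from arcs to edges: two distinct arcs yield the same undirected edge exactly when they form a $2$-cycle $x\to y\to x$ with $x\neq y$, while every other arc contributes a distinct edge. So the edge count equals the arc count minus the number of such $2$-cycles. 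An element $x$ lies on a $2$-cycle iff $y=x^k$, $x=y^k=x^{k^2}$, and $x\neq x^k$; equivalently $x^{k^2-1}=e$ but $x^{k-1}\neq e$, i.e. $o(x)\mid k_2$ and $o(x)\nmid k_1$, where $k_2=\gcd(k^2-1,n)$. The number of such elements is $\sum_{d\mid k_2,\,d\nmid k_1}t_d$.

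The delicate point, and the step I would treat most carefully, is the factor of one half: I must check that these elements genuinely pair off into $2$-cycles, so that the number of $2$-cycles is exactly half their count. For this I would verify that $f$ maps the set $S=\{x:o(x)\mid k_2,\ o(x)\nmid k_1\}$ into itself and acts on $S$ as a fixed-point-free involution. Indeed, if $x\in S$ and $y=x^k$, then $y^{k^2-1}=(x^{k^2-1})^k=e$ and $y^k=x^{k^2}=x\neq y$, so $y\in S$ and $f(y)=x$; thus $f|_S$ is an involution with no fixed points and $S$ splits into pairs. Consequently the number of $2$-cycles is $\frac{1}{2}\sum_{d\mid k_2,\,d\nmid k_1}t_d$.

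Subtracting the number of $2$-cycles from the arc count then yields
\[
|E(P(G,k))| = n-\sum_{d\mid k_1}t_d-\sum_{d\mid k_2,\,d\nmid k_1}\frac{t_d}{2},
\]
as claimed. The only auxiliary fact needed is the elementary equivalence that, for a divisor $o(x)$ of $n$, one has $o(x)\mid m$ if and only if $o(x)\mid\gcd(m,n)$, which I would record once and reuse for both $m=k-1$ and $m=k^2-1$. The rest is bookkeeping; the single genuine subtlety is justifying the division by two via the involution argument above.
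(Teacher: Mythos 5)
Your proof is correct and follows essentially the same route as the paper's: one arc per element under the map $x\mapsto x^k$, subtract the fixed points (orders dividing $k_1$), and halve the count of elements whose arcs form mutual pairs (orders dividing $k_2$ but not $k_1$). Your fixed-point-free involution argument for the factor $\frac{1}{2}$ is in fact a cleaner justification than the paper's, which only remarks that $t_d$ is even for $d>2$ and that $d=2$, $d\mid k_2$ forces $d\mid k_1$, so that the halved sum is an integer.
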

\begin{proof}
Corresponding to each element $a \in G$ there exists an edge in $P(G,k)$ between $a$ and $a^k$ if $a \neq a^k$. Now, if $x$ is an element $G$ which has order $d$ and $d \mid k-1$, then $x^k=x$. Hence, if $d \mid gcd(k-1,n)$, there are no edges corresponding to the elements of order $d$ in $G$. Now, edges corresponding to two elements $x$ and $y$ coincide if $x^k=y$ and $y^k=x$. In this case, $o(x) \mid {k^2-1}$ and $ o(y) \mid {k^2-1}$. Also if $x \in G$ has order $d$, $d \mid gcd(k^2-1,n)$ and $d \nmid gcd(k-1,n)$, then $x\neq x^k$ and edges corresponding to $x$ and $x^k$ coincide. Note that number of elements in $G$ of order $d$ is even if $d>2$, and if $d=2$ and $d \mid k_2$, then $d \mid k_1$.
\end{proof}
Since there are $\phi(d)$ elements of order $d$ in a cyclic group, the following corollary is obvious.
\begin{corollary}
Let $\mathbb{Z}_n$ be the cyclic group of order $n$. Then $\mid E(P(\mathbb{Z} _n,k))\mid=n-\sum_{d\mid k_1} \phi(d)-\sum_{d\mid k_2,d\nmid k_1}\frac{\phi(d)}{2}$.
\end{corollary}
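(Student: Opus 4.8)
The plan is to derive the corollary directly from the preceding theorem by substituting the explicit count of elements of each order that is available in a cyclic group. Since the theorem already expresses $|E(P(G,k))|$ for an arbitrary finite group $G$ of order $n$ in terms of the quantities $t_d$, the whole task reduces to evaluating $t_d$ in the special case $G=\mathbb{Z}_n$ and checking that the substitution is legitimate index by index.

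The key ingredient is the classical fact that in a cyclic group of order $n$ the number of elements of order $d$ is exactly $\phi(d)$ for each divisor $d$ of $n$ (and $0$ otherwise). I would establish this from the structure of $\mathbb{Z}_n$: writing $\mathbb{Z}_n=\langle g\rangle$, the element $g^m$ has order $n/gcd(m,n)$, so $g^m$ has order $d$ precisely when $gcd(m,n)=n/d$. Setting $e=n/d$ and $m=em'$, this condition becomes $gcd(m',d)=1$ with $m'$ ranging over $\{0,1,\dots,d-1\}$, and the number of such residues coprime to $d$ is $\phi(d)$ by definition of the totient. Hence $t_d=\phi(d)$ for every $d\mid n$.

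It remains only to confirm that this substitution applies to every divisor indexing the two sums. Because $k_1=gcd(k-1,n)$ and $k_2=gcd(k^2-1,n)$ are themselves divisors of $n$, any $d$ with $d\mid k_1$ or $d\mid k_2$ also divides $n$, so the identity $t_d=\phi(d)$ holds for each term of both sums. Replacing every $t_d$ by $\phi(d)$ in the theorem then yields
\[
|E(P(\mathbb{Z}_n,k))|=n-\sum_{d\mid k_1}\phi(d)-\sum_{d\mid k_2,\,d\nmid k_1}\frac{\phi(d)}{2},
\]
which is the asserted identity. There is no genuine obstacle here; the only point requiring a moment's care is the verification that every divisor indexing the sums divides $n$, which is immediate from $k_1,k_2\mid n$, so that the cyclic-group count $t_d=\phi(d)$ is indeed applicable throughout.
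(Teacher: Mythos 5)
Your proposal is correct and is exactly the paper's route: the paper derives the corollary from the preceding theorem by the single observation that a cyclic group of order $n$ has precisely $\phi(d)$ elements of order $d$ for each $d\mid n$, i.e.\ $t_d=\phi(d)$. Your additional details --- the standard computation that $g^m$ has order $n/\gcd(m,n)$ and the check that $k_1,k_2\mid n$ so every summation index divides $n$ --- merely flesh out what the paper leaves as ``obvious.''
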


\begin{example}
The $k$-power graphs of the symmetric group on 3 elements ($S_3$) for $k=2,3,4$ and $5$ are given in figures 1,2,3 and 4 respectively, where $\sigma_0=e,\sigma_1$=(1 2 3), $\sigma_2=$(1 3 2),   $\tau_1=$(1 2), $\tau_2$=(2 3) and $\tau_3$=(1 3) .\\

\end{example}

\unitlength 1mm 
\linethickness{0.4pt}
\ifx\plotpoint\undefined\newsavebox{\plotpoint}\fi 
\unitlength 1mm 
\linethickness{0.4pt}
\ifx\plotpoint\undefined\newsavebox{\plotpoint}\fi 
\begin{picture}(68.303,52.518)(0,0)
\multiput(77.053,53.268)(.03125,.03125){8}{\line(0,1){.03125}}
\unitlength 1mm 
\linethickness{0.4pt}
\ifx\plotpoint\undefined\newsavebox{\plotpoint}\fi 
\unitlength 1mm 
\linethickness{0.4pt}
\ifx\plotpoint\undefined\newsavebox{\plotpoint}\fi 
\begin{picture}(60.303,53.518)(0,0)
\multiput(77.053,53.268)(.03125,.03125){8}{\line(0,1){.03125}}
\unitlength 1mm 
\linethickness{0.4pt}
\ifx\plotpoint\undefined\newsavebox{\plotpoint}\fi 
\unitlength 1mm 
\linethickness{0.4pt}
\ifx\plotpoint\undefined\newsavebox{\plotpoint}\fi 
\begin{picture}(60.303,53.518)(0,0)
\multiput(77.053,53.268)(.03125,.03125){8}{\line(0,1){.03125}}
\multiput(20.053,48.268)(-.03358209,-.059701493){201}{\line(0,-1){.059701493}}
\put(20.303,48.518){\line(0,-1){22.75}}
\put(20.553,48.268){\line(3,-5){12.75}}
\put(20.053,48.268){\circle*{2.9}}
\put(20.053,51.268){\makebox(0,0)[cc]{$\sigma_0$}}
\put(13.553,37.018){\circle*{2.9}}
\put(9.550,37.018){\makebox(0,0)[cc]{$\tau_1$}}
\put(20.553,26.018){\circle*{2.9}}
\put(20.553,22.018){\makebox(0,0)[cc]{$\tau_2$}}
\put(33.553,26.018){\circle*{2.9}}
\put(33.553,22.018){\makebox(0,0)[cc]{$\tau_3$}}
\put(32.053,48.268){\circle*{2.9}}
\put(32.553,51.268){\makebox(0,0)[cc]{$\sigma_1$}}
\put(38.553,37.768){\circle*{2.9}}
\put(42.553,37.768){\makebox(0,0)[cc]{$\sigma_2$}}
\put(25.553,10.768){\makebox(0,0)[cc]{Figure 1: $P(S_3,2)$}}
\multiput(32.053,48.268)(.03358209,-.053482587){201}{\line(0,-1){.053482587}}
\end{picture}

\put(20.303,48.053){\line(2,0){12.65}}
\put(20.053,47.5){\line(2,-1){20}}
\put(20.053,48.268){\circle*{2.9}}
\put(20.053,51.268){\makebox(0,0)[cc]{$\sigma_0$}}
\put(13.553,37.018){\circle*{2.9}}
\put(9.550,37.018){\makebox(0,0)[cc]{$\tau_1$}}
\put(20.553,26.018){\circle*{2.9}}
\put(20.553,22.018){\makebox(0,0)[cc]{$\tau_2$}}
\put(33.553,26.018){\circle*{2.9}}
\put(33.553,22.018){\makebox(0,0)[cc]{$\tau_3$}}
\put(32.053,48.268){\circle*{2.9}}
\put(32.553,51.268){\makebox(0,0)[cc]{$\sigma_1$}}
\put(38.553,37.768){\circle*{2.9}}
\put(42.553,37.768){\makebox(0,0)[cc]{$\sigma_2$}}


\end{picture}

\put(20.053,48.268){\circle*{2.9}}
\put(20.053,51.268){\makebox(0,0)[cc]{$\sigma_0$}}
\put(13.553,37.018){\circle*{2.9}}
\put(9.550,37.018){\makebox(0,0)[cc]{$\tau_1$}}
\put(20.553,26.018){\circle*{2.9}}
\put(20.553,22.018){\makebox(0,0)[cc]{$\tau_2$}}
\put(33.553,26.018){\circle*{2.9}}
\put(33.553,22.018){\makebox(0,0)[cc]{$\tau_3$}}
\put(32.053,48.268){\circle*{2.9}}
\put(32.553,51.268){\makebox(0,0)[cc]{$\sigma_1$}}
\put(38.553,37.768){\circle*{2.9}}
\put(42.553,37.768){\makebox(0,0)[cc]{$\sigma_2$}}
\put(25.553,10.768){\makebox(0,0)[cc]{Figure 2: $P(S_3,3)$}}

\end{picture}

\unitlength 1mm 
\linethickness{0.4pt}
\ifx\plotpoint\undefined\newsavebox{\plotpoint}\fi 
\unitlength 1mm 
\linethickness{0.4pt}
\ifx\plotpoint\undefined\newsavebox{\plotpoint}\fi 
\begin{picture}(60.303,53.518)(0,0)
\multiput(77.053,53.268)(.03125,.03125){8}{\line(0,1){.03125}}
\unitlength 1mm 
\linethickness{0.4pt}
\ifx\plotpoint\undefined\newsavebox{\plotpoint}\fi 
\unitlength 1mm 
\linethickness{0.4pt}
\ifx\plotpoint\undefined\newsavebox{\plotpoint}\fi 
\begin{picture}(60.303,53.518)(0,0)
\multiput(77.053,53.268)(.03125,.03125){8}{\line(0,1){.03125}}
\unitlength 1mm 
\linethickness{0.4pt}
\ifx\plotpoint\undefined\newsavebox{\plotpoint}\fi 
\unitlength 1mm 
\linethickness{0.4pt}
\ifx\plotpoint\undefined\newsavebox{\plotpoint}\fi 
\begin{picture}(77.303,53.518)(0,0)
\multiput(77.053,53.268)(.03125,.03125){8}{\line(0,1){.03125}}
\multiput(20.053,48.268)(-.03358209,-.059701493){201}{\line(0,-1){.059701493}}
\put(20.303,48.518){\line(0,-1){22.75}}
\put(20.553,48.268){\line(3,-5){12.75}}
\put(20.053,48.268){\circle*{2.9}}
\put(20.053,51.268){\makebox(0,0)[cc]{$\sigma_0$}}
\put(13.553,37.018){\circle*{2.9}}
\put(9.550,37.018){\makebox(0,0)[cc]{$\tau_1$}}
\put(20.553,26.018){\circle*{2.9}}
\put(20.553,22.018){\makebox(0,0)[cc]{$\tau_2$}}
\put(33.553,26.018){\circle*{2.9}}
\put(33.553,22.018){\makebox(0,0)[cc]{$\tau_3$}}
\put(32.053,48.268){\circle*{2.9}}
\put(32.553,51.268){\makebox(0,0)[cc]{$\sigma_1$}}
\put(38.553,37.768){\circle*{2.9}}
\put(42.553,37.768){\makebox(0,0)[cc]{$\sigma_2$}}
\put(25.553,10.768){\makebox(0,0)[cc]{Figure 3: $P(S_3,4)$}}

\end{picture}


\end{picture}

\put(20.053,48.268){\circle*{2.9}}
\put(20.053,51.268){\makebox(0,0)[cc]{$\sigma_0$}}
\put(13.553,37.018){\circle*{2.9}}
\put(9.550,37.018){\makebox(0,0)[cc]{$\tau_1$}}
\put(20.553,26.018){\circle*{2.9}}
\put(20.553,22.018){\makebox(0,0)[cc]{$\tau_2$}}
\put(33.553,26.018){\circle*{2.9}}
\put(33.553,22.018){\makebox(0,0)[cc]{$\tau_3$}}
\put(32.053,48.268){\circle*{2.9}}
\put(32.553,51.268){\makebox(0,0)[cc]{$\sigma_1$}}
\put(38.553,37.768){\circle*{2.9}}
\put(42.553,37.768){\makebox(0,0)[cc]{$\sigma_2$}}
\put(25.553,10.768){\makebox(0,0)[cc]{Figure 4: $P(S_3,5)$}}
\multiput(32.053,48.268)(.03358209,-.053482587){200}{\line(0,-1){.053482587}}
\end{picture}

\begin{example}
The $k$-power graph of $\mathbb{Z}_4$ when $k=2$ is the following tree.

\end{example}
\unitlength 1mm 
\linethickness{0.4pt}
\ifx\plotpoint\undefined\newsavebox{\plotpoint}\fi 
\begin{picture}(81.838,30.525)(0,0)
\put(27.82,20.871){\line(0,-1){8.75}}

\multiput(10.57,11.371)(8.625,.03125){4}{\line(1,0){8.625}}
\put(45.57,11.121){\circle*{2.9}}
\put(45.57,8.018){\makebox(0,0)[cc]{1}}
\put(27.82,11.371){\circle*{2.9}}
\put(27.82,8.018){\makebox(0,0)[cc]{2}}
\put(27.82,21.871){\circle*{2.9}}
\put(27.82,25.018){\makebox(0,0)[cc]{0}}
\put(10.82,11.371){\circle*{2.9}}
\put(10.82,8.018){\makebox(0,0)[cc]{3}}
\put(42.553,0){\makebox(0,0)[cc]{Figure 5: $P(\mathbb{Z}_4,2$)}}
\end{picture}\\

It is clear from the Figures 1-5 that $k$-power graphs of a finite group need not be connected always. The following proposition gives a characterization to the cyclic groups such that its $k-$power graph is connected.

\begin{proposition} \cite{chattopadhyay2017some}
The graph $P(\mathbb{Z}_n,k)$ is connected if and only if $n\mid k^m$ for some $m\in \mathbb{N}.$ Also in this case $P(\mathbb{Z}_n,k)$ is a tree.
\end{proposition}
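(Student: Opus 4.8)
The plan is to realize $P(\mathbb{Z}_n,k)$ as the underlying simple graph of a \emph{functional digraph}. Define $f\colon\mathbb{Z}_n\to\mathbb{Z}_n$ by $f(a)=a^k$; writing $\mathbb{Z}_n$ additively this is $x\mapsto kx\,(mod\, n)$. Since every element has a unique $k$-th power, $f$ is a well-defined map, and the digraph $D$ with an arc $a\to f(a)$ for each $a$ has out-degree exactly $1$ at every vertex. The graph $P(\mathbb{Z}_n,k)$ is obtained from $D$ by forgetting orientations, deleting loops (arcs with $f(a)=a$), and identifying the two arcs of any $2$-cycle into a single edge; none of these operations changes connectivity, so $P(\mathbb{Z}_n,k)$ is connected if and only if $D$ is weakly connected. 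I would begin by recording the standard structural fact about functional digraphs: each weakly connected component of $D$ contains exactly one directed cycle, and every vertex of a component flows, under iteration of $f$, into that cycle. In particular, the number of weakly connected components equals the number of cycles of $f$.

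Next I would observe that the identity $e$ (that is, $0\in\mathbb{Z}_n$) is always a fixed point of $f$, so $\{e\}$ is always one of the cycles. Hence $D$ is weakly connected --- equivalently $P(\mathbb{Z}_n,k)$ is connected --- precisely when $\{e\}$ is the \emph{only} cycle, which by the structural fact happens exactly when every $x\in\mathbb{Z}_n$ satisfies $f^m(x)=e$ for some $m$, i.e.\ $k^m x\equiv 0\,(mod\, n)$. This is the reformulation I would reduce the statement to.

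The equivalence with the arithmetic condition is then short. If $n\mid k^m$ for some $m$, then $k^m x\equiv 0\,(mod\, n)$ for \emph{every} $x$, so every vertex reaches $e$ and the graph is connected. Conversely, if the graph is connected, then in particular the generator $1\in\mathbb{Z}_n$ reaches $0$, giving $k^m\cdot 1\equiv 0\,(mod\, n)$, that is $n\mid k^m$; this is the only place where the cyclic structure (the existence of a generator) is really used.

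Finally, for the tree claim I would argue by an edge count in the connected case. If $P(\mathbb{Z}_n,k)$ is connected, then $e$ is the unique fixed point and there are no $2$-cycles, since any other fixed point or any $2$-cycle would be a second cycle of $f$, contradicting weak connectivity. Consequently no loop other than the one at $e$ is deleted and no two distinct arcs are identified, so the $n$ arcs of $D$ yield exactly $n-1$ distinct edges of $P(\mathbb{Z}_n,k)$ after removing the single loop at $e$. A connected graph on $n$ vertices with $n-1$ edges is a tree, which finishes the proof. The main technical care --- and the step most likely to need explicit justification --- is verifying that loop-deletion and arc-identification preserve connectivity and, in the connected case, do not drop the edge total below $n-1$; equivalently, pinning down that weak connectivity of $D$ forces $e$ to be its unique cycle.
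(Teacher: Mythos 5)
Your proof is correct. A point of comparison worth noting first: the paper does not actually prove this proposition --- it is imported from \cite{chattopadhyay2017some} --- so the natural things to measure your argument against are the paper's adjacent results, and there your route is genuinely different in packaging though it shares one core idea. The paper's Theorem 4 (connectivity of $P(G,k)$ for general finite $G$) is proved by walking each $x$ to $e$ along $x, x^k, x^{k^2},\ldots$, which is exactly your ``every vertex flows into the cycle $\{e\}$'' step; your converse, however, is sharper for the cyclic case, since you only need the single generator $1$ to reach $0$, giving $n\mid k^m$ directly rather than the elementwise condition $o(x)\mid k^{n_x}$ for all $x$. Where your approach really pays off is the tree claim: the paper's nearest tool is the forest characterization (its Theorem 9), which detects cycles arithmetically via $ord_m(k)>2$ for element orders $m$ coprime to $k$, whereas your functional-digraph lemma (exactly one directed cycle per weakly connected component) lets you get acyclicity for free from an edge count --- the $n$ arcs of $x\mapsto kx$ lose only the loop at $0$, and no two arcs merge because connectivity rules out fixed points other than $0$ and all $2$-cycles, so a connected graph on $n$ vertices with $n-1$ edges is a tree. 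You correctly identified the one step needing care (that loop deletion and $2$-cycle identification are the only ways arcs can collapse, and that weak connectivity forces $\{0\}$ to be the unique cycle), and your treatment of it --- two distinct arcs yield the same undirected edge only on a $2$-cycle --- is complete. The only cosmetic remark is that your opening multiplicative notation $f(a)=a^k$ alongside the additive $x\mapsto kx$ should be stated once and used consistently, but this does not affect validity.
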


For any finite group $G$, the characterization for $P(G,k)$ to be connected is given in the following theorem.
\begin{theorem}
Let $G$ be a finite group. $P(G,k)$ is connected if and only if for every $x \in G$, $o(x)\mid k^n$ for some $n \in \mathbb{N}$. For $x \in G$, let $n_x= min\{n \in \mathbb{N}: o(x)\mid k^n \}$, then $diam(P(G,k)) \leq 2 max \{ n_x: x \in G \}$.
\end{theorem}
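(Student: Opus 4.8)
The plan is to handle both directions of the equivalence and the diameter bound with one device: for each $x\in G$ consider the \emph{forward sequence} $x,\,x^{k},\,x^{k^{2}},\dots$ obtained by repeatedly applying the map $a\mapsto a^{k}$. Since $(x^{k^{i}})^{k}=x^{k^{i+1}}$, any two consecutive terms are either equal or adjacent in $P(G,k)$, so this sequence traces a walk in the graph. I would also record at the outset the elementary reformulation of the hypothesis that drives the harder direction: for a positive integer $d$ one has $d\mid k^{n}$ for some $n\in\mathbb{N}$ if and only if every prime dividing $d$ also divides $k$. (The forward implication is immediate; for the converse, if $p\mid k$ for every prime $p\mid d$, then $d\mid k^{n}$ once $n$ exceeds every exponent appearing in $d$.) In particular $n_{x}=\min\{n\in\mathbb{N}:o(x)\mid k^{n}\}$ is finite exactly when this condition holds for $d=o(x)$.

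First I would prove sufficiency together with the diameter estimate. Assume $o(x)\mid k^{n_{x}}$ for every $x\in G$. Then $x^{k^{n_{x}}}=e$, so the forward sequence $x,x^{k},\dots,x^{k^{n_{x}}}=e$ is a walk from $x$ to $e$; deleting repeated vertices leaves a genuine path, so $x$ and $e$ lie in the same component and $d(x,e)\le n_{x}$. As $x$ is arbitrary, $P(G,k)$ is connected. For any two vertices $x,y$, the triangle inequality through $e$ gives $d(x,y)\le d(x,e)+d(e,y)\le n_{x}+n_{y}$, and maximising over $x,y$ yields $diam(P(G,k))\le 2\max\{n_{x}:x\in G\}$.

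For necessity I would let $H$ be the set of all $x\in G$ with $o(x)\mid k^{n}$ for some $n\in\mathbb{N}$, and show that $H$ is a union of connected components of $P(G,k)$; since the first part shows $H$ is itself connected (every element of $H$ is joined to $e\in H$), $H$ is then exactly the component of $e$, and connectedness of $P(G,k)$ becomes equivalent to $H=G$, which is precisely the stated condition. It therefore suffices to prove that no edge joins $H$ to $G\setminus H$. Suppose $x\sim y$ with $x\in H$. If $y=x^{k}$, then $o(y)=o(x^{k})\mid o(x)\mid k^{n}$, so $y\in H$. The remaining case is $x=y^{k}$, where I would invoke the reformulation above applied to $o(y^{k})=o(y)/gcd(o(y),k)$: if some prime $p$ divided $o(y)$ but not $k$, then $p$ would not divide $gcd(o(y),k)$, so the entire $p$-part of $o(y)$ would survive in $o(y^{k})=o(x)$, forcing $p\mid o(x)\mid k^{n}$ and hence $p\mid k$, a contradiction. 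Thus every prime dividing $o(y)$ divides $k$, so $y\in H$.

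I expect the main obstacle to be exactly this ``backward'' subcase $x=y^{k}$: passing from control of $o(x)$ to control of $o(y)$ is not a divisibility statement in the useful direction (one only gets $o(x)\mid o(y)$, which is useless here) and genuinely requires the prime-by-prime valuation argument. Everything else is routine, including the observation that whenever two consecutive terms of a forward sequence coincide there is simply a repeated vertex and no edge, so the fixed points of $a\mapsto a^{k}$ (notably $e$ itself) cause no difficulty for either the walk or the component count. With the two directions and the diameter bound assembled as above, the theorem follows.
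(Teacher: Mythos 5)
Your proposal is correct, and where the paper actually gives an argument you match it exactly: the forward sequence $x, x^{k}, x^{k^{2}},\dots, x^{k^{n_x}}=e$ as a walk to the identity, followed by the triangle inequality through $e$ to get $diam(P(G,k))\leq 2\max\{n_x : x\in G\}$, is precisely the paper's proof of sufficiency and the diameter bound. The difference is in the converse, which the paper dismisses with the single sentence ``The converse also follows,'' providing no detail. You supply a genuine proof: you let $H$ be the set of elements whose order divides some power of $k$, and show no edge leaves $H$, the only nontrivial case being a backward edge $x=y^{k}$ with $x\in H$, which you handle correctly via $o(y^{k})=o(y)/\gcd(o(y),k)$ together with the reformulation that $d\mid k^{n}$ for some $n$ if and only if every prime dividing $d$ divides $k$; since $p\nmid k$ would force the full $p$-part of $o(y)$ to survive into $o(x)$, every prime of $o(y)$ must divide $k$. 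Your diagnosis that this backward subcase is the real content of the converse is accurate --- the naive divisibility $o(x)\mid o(y)$ points the wrong way, and the paper's terseness hides exactly this step. Your route also yields slightly more than the stated theorem: it identifies $H$ as precisely the connected component of $e$, which describes the structure even when $P(G,k)$ is disconnected.
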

\begin{proof}
Let $x\in G$ and $o(x)\mid k^n$ for some $n\in \mathbb{N}$. Suppose $n_x=min\{n\in \mathbb{N}: o(x)\mid k^n\}$. Then $x$ is connected to $e$ through a path of length $n_x$ in $P(G,k)$. Hence $P(G,k)$ is  connected if for all $x\in G$, $o(x) \mid k^n$ for some $n\in \mathbb{N}$, and $diam(P(G,k)) \leq 2 max \{ n_x: x \in G \}$. The converse also follows.
\end{proof}
The following theorem proves that the clique number is bounded above.
\begin{theorem}
The clique number of $P(G,k)$, $\omega(P(G,k)) \leq 3$ for any finite group $G$ and $\omega(P(G,k))=3$ if and only if there exist elements in $G$ of order $m$ such that $m>3, m \mid k^3-1$ and $m\nmid k-1.$
\end{theorem}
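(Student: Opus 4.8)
The plan is to exploit that the relation $x^k=y$ turns the $k$-th power map $\phi\colon a\mapsto a^k$ into a function, so in the oriented picture every vertex has out-degree exactly one. The governing observation is that an edge of $P(G,k)$ joining $a$ and $b$ is \emph{realised} by at least one of $\phi(a)=b$ or $\phi(b)=a$, and each vertex realises at most one such forward relation, since $\phi(a)=a^k$ is a single element. I would first deduce the upper bound. If $C$ is a clique on $t$ vertices, its $t(t-1)/2$ edges must each be realised by a forward relation issuing from one of its endpoints, while the $t$ vertices of $C$ supply at most $t$ such relations; hence $t(t-1)/2\le t$, forcing $t\le 3$ and giving $\omega(P(G,k))\le 3$ for every finite $G$.

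For the equality characterisation I would handle the two implications separately. Assume first $\omega(P(G,k))=3$ and let $\{x,y,z\}$ be a triangle. The counting above is now tight: three edges need three realising relations and only three are available, so each of $x,y,z$ points forward to exactly one other vertex of the triangle. An orientation of a triangle with all out-degrees equal to one is a directed $3$-cycle, so after relabelling $x^k=y$, $y^k=z$, $z^k=x$. Then $x^{k^3}=x$, so $m:=o(x)$ divides $k^3-1$, while $x\ne x^k$ gives $m\nmid k-1$; one checks directly that $m>3$ is then automatic, since for $m\in\{1,2,3\}$ the two divisibility conditions are incompatible.

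For the converse, take $a\in G$ of order $m$ with $m\mid k^3-1$ and $m\nmid k-1$, and examine the triple $a,a^k,a^{k^2}$ inside $\langle a\rangle$. From $k^3\equiv 1\pmod m$ we get $gcd(k,m)=1$, so $k$ is invertible modulo $m$ with $ord_m(k)\mid 3$; the hypothesis $m\nmid k-1$ excludes order $1$, forcing $ord_m(k)=3$. Then $1,k,k^2$ are pairwise incongruent modulo $m$, so $a,a^k,a^{k^2}$ are three distinct elements, and $(a^{k^2})^k=a^{k^3}=a$ closes them into a directed $3$-cycle, that is, a triangle. Hence $\omega(P(G,k))\ge 3$, and combined with the upper bound $\omega(P(G,k))=3$.

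The step I expect to be the main obstacle is the structural claim in the forward direction that an arbitrary triangle must sit as a directed $3$-cycle under $\phi$ rather than as some other configuration; the out-degree counting resolves this cleanly. The remaining care lies in the converse, where the distinctness of $a,a^k,a^{k^2}$ must be made to rest exactly on $ord_m(k)=3$, so that pinning the multiplicative order of $k$ down to $3$ (ruling out $1$) via $m\mid k^3-1$ and $m\nmid k-1$ is what ultimately drives the equivalence.
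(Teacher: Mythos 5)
Your proof is correct, and while it runs on the same engine as the paper's --- the map $a\mapsto a^k$ is a function, so each vertex can realise at most one edge of a clique --- you package the argument differently and, in one place, more completely. For the upper bound the paper rules out $K_4$ by listing the six adjacency conditions and deriving a contradiction from $x^k=y$ (conditions 4 and 5 force $w^k=x$ and $z^k=x$, killing condition 3); your double count $t(t-1)/2\le t$ for a $t$-clique is a uniform version of the same observation and gives $t\le 3$ at once. More significantly, in the forward direction of the equality the paper simply asserts that $\omega(P(G,k))=3$ yields an element $a\neq e$ with $a^k\neq a,e$, $a^{k^2}\neq a,e$ and $a^{k^3}=a$; your out-degree argument, showing that a triangle must be oriented as a directed $3$-cycle under $a\mapsto a^k$, is precisely the justification the paper leaves implicit. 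In the converse you pin down $ord_m(k)=3$ to obtain distinctness of $a,a^k,a^{k^2}$, whereas the paper invokes the divisibility remark that $m\mid k^3-1$ together with $m\mid k^2-1$ forces $m\mid k-1$; these are equivalent routes, since $gcd(k^3-1,k^2-1)=k-1$. Finally, your observation that $m>3$ is automatic is correct and worth making explicit, as $ord_m(k)=3$ forces $3\mid \phi(m)$, which already fails for $m\in\{1,2,3\}$; the paper does not address this clause at all.
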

\begin{proof}
If possible, suppose there exist elements $x,y,z, w \in G$ which induces $K_4$. Then the following six conditions must be satisfied.
\begin{enumerate}
    \item $x^k=y$ or $y^k=x$
    \item $y^k=z$ or $z^k=y$
    \item $z^k=w$ or $w^k=z$
    \item $w^k=x$ or $x^k=w$
    \item $x^k=z$ or $z^k=x$
    \item $y^k=w$ or $w^k=y$ 
    \end{enumerate}
Suppose $x^k=y$ in condition 1. Then conditions 4 and 5 implies that $w^k=x$ and $z^k=x$, which is not possible by condition 3. Hence, $P(G,k)$ has no subgraph isomorphic to $K_4$. Hence, the clique number is atmost 3.

 Now suppose $\omega(P(G,k))=3$. Then there exists an element $a \neq e$ such that $a^k \neq a,e$, $ a^{k^2} \neq a,e$ and $a^{k^3}=a$. Then $o(a) \mid k^3-1$ and $o(a) \nmid k-1$. Note that, if $m \mid k^3-1$ and $m \mid k^2-1$ then $m \mid k-1$. The converse also follows.
\end{proof}

Next theorem states that the chromatic number also bounded above by 3.

\begin{theorem}
Let $G$ be a finite group, then the chromatic number $\chi(P(G,k)) \leq 3$.
\end{theorem}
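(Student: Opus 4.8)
The plan is to exploit the fact that $P(G,k)$ is built from the single-valued map $f\colon G\to G$, $f(x)=x^k$, so that it carries almost no ``extra'' edges. Concretely, every edge of $P(G,k)$ has the form $\{v,v^k\}$ for some $v\in G$ with $v^k\neq v$: by definition an edge $\{a,b\}$ exists precisely when $a^k=b$ or $b^k=a$, and in either case one endpoint is the $k$-th power of the other. I would record this as the key structural observation, since it already caps the number of edges: the assignment $v\mapsto\{v,v^k\}$ sends the set $\{v\in G: v^k\neq v\}$ onto $E(P(G,k))$, so that $\mid E(P(G,k))\mid\leq n$.

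The central step is to upgrade this edge bound from the whole graph to every subgraph. Let $H$ be any subgraph of $P(G,k)$ on vertex set $S$. Each edge of $H$ is still of the form $\{v,v^k\}$ with $v$ an endpoint, hence $v\in S$; thus the surjection $v\mapsto\{v,v^k\}$ restricts to a surjection from $\{v\in S: v^k\neq v\}$ onto $E(H)$, giving $\mid E(H)\mid\leq\mid S\mid=\mid V(H)\mid$. The only subtlety is the coincidence $a^k=b,\ b^k=a$ (a ``$2$-cycle''), where two vertices name the same edge, and the loops $v^k=v$, which contribute nothing; neither case breaks the inequality, and I would check both explicitly. Consequently $P(G,k)$ is a pseudoforest, in which every connected component contains at most one cycle.

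From $\mid E(H)\mid\leq\mid V(H)\mid$ for all subgraphs the colouring bound is routine. The sum of degrees in $H$ equals $2\mid E(H)\mid\leq 2\mid V(H)\mid$, so $H$ has a vertex of degree at most $2$; that is, $P(G,k)$ is $2$-degenerate. Ordering the vertices by repeatedly deleting a vertex of degree at most $2$ and then colouring greedily in the reverse order, each vertex meets at most two already-coloured neighbours, so three colours suffice and $\chi(P(G,k))\leq 3$.

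I expect the only real obstacle to be the bookkeeping in the second paragraph: confirming that $v\mapsto\{v,v^k\}$ genuinely surjects onto the edges of an arbitrary (not necessarily induced) subgraph, and that the $2$-cycle and loop cases leave the inequality $\mid E(H)\mid\leq\mid V(H)\mid$ intact. Once the $2$-degeneracy is secured the greedy argument is standard, and the bound is best possible, since the preceding theorem on the clique number already exhibits groups for which $P(G,k)$ contains a triangle.
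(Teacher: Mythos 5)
Your proof is correct, and it takes a genuinely different route from the paper's. The paper gives an explicit colouring procedure: it walks each connected component outward from a chosen vertex $x$, assigning colour 1 to $x$, colour 2 to $x^k$, and then colouring the successive layers of in-neighbours (vertices $y$ with $y^k=x$, then $z$ with $z^k=y$, and so on) with colours 2 and 3, the choice at each step depending on whether the vertex in question coincides with $(x^k)^k$; this exploits the same functional structure you use (the map $x\mapsto x^k$ has out-degree one, so each component is a cycle with trees attached), but in an algorithmic, case-by-case fashion that is left rather informal, and whose closing sentence even overstates the conclusion as ``the minimum number of colours required is 3'' when only $\chi(P(G,k))\leq 3$ is established (and is all the theorem claims). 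Your argument replaces this traversal by the clean structural observation that the assignment $v\mapsto\{v,v^k\}$ surjects onto the edge set of any subgraph $H$ from a subset of $V(H)$, giving $\mid E(H)\mid\leq\mid V(H)\mid$; hence $P(G,k)$ is a pseudoforest, hence $2$-degenerate, and the greedy bound $\chi\leq 3$ follows by the standard degeneracy argument. Your bookkeeping on the two coincidences (loops $v^k=v$, which produce no edge, and $2$-cycles $a^k=b,\ b^k=a$, where two vertices name one edge) is exactly right and only strengthens the inequality -- it mirrors the edge-count in the paper's Theorem 1. What the paper's approach buys is an explicit colouring one could carry out by hand; what yours buys is rigour and robustness, since the degeneracy argument localizes automatically to every subgraph and is insensitive to the shape of the components, and your remark that the bound $3$ is attained is also sound (via the clique-number theorem, or via the paper's example $P(\mathbb{Z}_{31},2)$, which has $\chi=3$ with $\omega=2$).
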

\begin{proof}
Choose a vertex $x$ from a connected component of $P(G,k)$ and colour 1 is assigned. Assign colour 2 to the vertex $x^k$. Label $y$ to all vertices adjacent to $x$ except $x^k$. Then clearly $y^k=x$. If $(x^k)^k=y$, colour 3 is assigned to $y$, otherwise assign colour 2. Now all the vertices adjacent to $y$ are labelled as $z$. Then $z^k=y$ and colour $z$ using colour 3. Now label $w$ to all the vertices adjacent to $z$, then $w^k=z$. Again colour 3 is given to $w,$ if $(x^k)^k=w$, otherwise give colour 2. Proceed with the same process until every vertices in that component get a colour. Apply the same procedure to each connected components of $P(G,k)$. So we conclude that the minimum number of colours required to properly colour $P(G,k)$ is 3.
\end{proof}

 The question of whether the $k$-power graph is perfect therefore arises naturally. The following remark gives an answer for that.

\begin{remark}
 The $k$-power graph of a finite group need not be perfect. For example, $P(\mathbb{Z}_{31},2)$ is a union of an isolated vertex and six $5-$cycles, which has chromatic number $3$ and clique number equal to $2$.
\end{remark}

The groups given by the presentation $Q_{4n}=\langle a,b : a^n=b^2, a^{2n}=1, b^{-1}ab=a^{-1} \rangle$ are the generalized quaternion groups. 

\begin{theorem}
Let $G$ be a finite group, then $P(G,k)$ is a star graph if and only if one of the following holds.
\begin{itemize}
    \item $o(x)\mid k \, \,  \forall x \in G$ 
    \item $G=\mathbb{Z}_4$ and $k=2$ 
    \item $G=Q_8$ and $k=2 \, \text{or}\,\, 6$ 
\end{itemize}
\end{theorem}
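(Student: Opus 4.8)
The statement is an equivalence, so the plan is to treat the two directions separately, putting almost all the work into the forward implication. The reverse direction is a direct verification. If $o(x)\mid k$ for all $x\in G$, then $x^k=e$ for every $x$, so each non-identity vertex is adjacent to $e$ and, since two non-identity elements $x,y$ satisfy $x^k=e\neq y$, to nothing else; hence $P(G,k)=K_{1,n-1}$ is a star centred at $e$. The cases $G=\mathbb{Z}_4,\,k=2$ and $G=Q_8,\,k\in\{2,6\}$ I would check by a short computation, the key observation being that $k\equiv 2\pmod 4$ forces $x^k=x^2$ to equal the unique involution for every element of order $4$, so the graph is a star centred at that involution.

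For the forward direction I would first record that a star $K_{1,m}$ with $m\ge 2$ has a single vertex of degree exceeding $1$, its centre, with all others being leaves; the groups with $|G|\le 2$ produce at most a single edge and fall under the first bullet, so I may assume $m\ge 2$. Now split according to where the identity $e$ sits. If $e$ is the centre, then every other vertex $x$ is a leaf adjacent to $e$, and since $e^k=e\neq x$ adjacency forces $x^k=e$, i.e. $o(x)\mid k$ for all $x$, which is the first bullet. Suppose instead $e$ is a leaf, with centre $c\neq e$. From $e\sim c$ and $e^k=e$ we obtain $c^k=e$, and for any other leaf $x$ (so $x\neq e,c$) adjacency $x\sim c$ together with $c^k=e\neq x$ forces $x^k=c$.

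From these two relations I would extract the structural constraints. If $o(c)>2$ then $c^{-1}\neq e,c$ is a leaf, so $(c^{-1})^k=c$; but $(c^{-1})^k=(c^k)^{-1}=e$, a contradiction, whence $o(c)=2$ and $k$ is even. An entirely similar argument rules out a second involution $y$: being a leaf it would satisfy $y^k=c$, yet $y^k=e$ since $k$ is even. Thus $c$ is the unique involution, and because the involution $c=x^k$ lies in $\langle x\rangle$ for every $x\neq e,c$, each cyclic subgroup $\langle x\rangle$ has even order, so every non-identity element of $G$ has even order.

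The heart of the argument, and the step I expect to be the main obstacle, is converting these conditions into a classification of $G$. By Cauchy's theorem the absence of odd-order elements makes $G$ a $2$-group, and a finite $2$-group with a unique involution is cyclic or generalised quaternion. To pin down the exponent I would use that each $x$ of order $d>2$ satisfies $c=x^{d/2}$, so $x^k=c$ gives $k\equiv d/2\pmod d$; an element of order $8$ would force $k\equiv 4\pmod 8$ while the order-$4$ element inside its cyclic subgroup forces $k\equiv 2\pmod 4$, an impossibility. Hence $G$ has exponent at most $4$, and the only cyclic or generalised quaternion groups of exponent $\le 4$ are $\mathbb{Z}_2$ (which already lies in the first bullet), $\mathbb{Z}_4$ and $Q_8$. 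Finally $k\equiv 2\pmod 4$ selects $k=2$ for $\mathbb{Z}_4$ and $k\in\{2,6\}$ for $Q_8$ in the range $2\le k\le n$, giving the remaining two bullets. If one wishes to avoid quoting the classification of $2$-groups with a unique involution, the exponent-$4$ case can be finished directly by noting that $G/\langle c\rangle$ is elementary abelian and that the squaring map induces an anisotropic quadratic form over $\mathbb{F}_2$, which cannot have dimension exceeding $2$, again forcing $|G|\le 8$.
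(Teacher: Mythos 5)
Your proof is correct, and its skeleton coincides with the paper's: both reduce the star condition to a centre $c$ satisfying $c^k=e$ with $x^k=c$ for every other vertex, both identify $c$ as the unique involution, and both finish by citing the same classification (a $2$-group with a unique involution is cyclic or generalised quaternion) before reading off $k\equiv 2 \pmod 4$ and the range $2\le k\le n$ to get $k=2$ for $\mathbb{Z}_4$ and $k\in\{2,6\}$ for $Q_8$. The differences are in the decomposition and in completeness, and they favour your version. The paper picks $x$ with $o(x)\nmid k$, sets $y=x^k$, and splits on $x^2=y$ versus $x^2\neq y$; in the first branch it asserts in one line that ``$G$ is a group with unique involution and all other elements have order $4$,'' and in the second branch it tacitly assumes $x^2\neq e$ (if $x$ were an involution with $k$ odd one needs a separate word). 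You instead split on whether $e$ is the centre or a leaf, get $o(c)=2$ cleanly from $(c^{-1})^k=(c^k)^{-1}=e$, kill a second involution by the parity of $k$, invoke Cauchy to see $G$ is a $2$-group, and---crucially---prove the exponent bound via the congruences $k\equiv d/2\pmod d$ (an element of order $8$ forces $k\equiv 4\pmod 8$, clashing with $k\equiv 2\pmod 4$ from its order-$4$ power). That exponent step is exactly what is needed to pass from the classification theorem to the short list $\mathbb{Z}_4$, $Q_8$, and the paper never writes it down, so your argument patches a genuine gap rather than merely restating the proof. Your closing alternative---$G/\langle c\rangle$ elementary abelian with squaring inducing an anisotropic quadratic form over $\mathbb{F}_2$, hence of dimension at most $2$---is a legitimate way to avoid quoting the classification altogether, though it trades one citation for the verification that the form is indeed quadratic; the paper has no analogue of this.
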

\begin{proof}
Let $P(G,k)$ is a star graph and let there exist $x\in G$ such that $o(x)\nmid k$, then $x^k\neq e$. Suppose $x^k=y$ for some $y\in G$. If $y=x$, then $z^k=x$ for all $z\in G, z\neq x$ which is not possible since $e^k\neq x$.
If $y \neq x$, then $z^k= y$ for all $z \in G, z \neq y$ and $y^k=e$. 

If $x^2=y$, then $y^2=e$, which implies $G$ is a group with unique involution and all other elements have order $4$. Hence $G=\mathbb{Z}_4$ with $k=2$ or $G=Q_8$ with $k=2 \, \text{or} \, 6$ [A $p$-group with unique subgroup of order $p$ is either cyclic or generalized quaternion].\\
If $x^2\neq y$, then $x^k=y$ and $(x^2)^k=y$ which implies $x^k=e$ and hence $o(x)\mid k$.

For the converse, if $o(x)\mid k, \forall x\in G$, then $x^k=e, \forall x\in G$, and hence $P(G,k)$ is a star graph. The $k$-power graphs of $\mathbb{Z}_4$ and $Q_8$ also star graphs when $k=2 \, \text{and}\, 6$.
\end{proof}
Clearly, if $k-1 \mid o(x)$ for all $x \in G$, then $x^k=x$ and every vertex in $G$ is an isolated vertex in $P(G,k)$, and vice versa. Hence the following theorem is immediate.
\begin{theorem}
$P(G,k)$ is an empty graph if and only if $o(x) \mid k-1$ for every $x \in G$.
\end{theorem}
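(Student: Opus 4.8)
The plan is to reduce the emptiness of $P(G,k)$ to the condition that the $k$-th power map $x \mapsto x^k$ fixes every element of $G$, and then to translate that fixed-point condition into the claimed divisibility statement. First I would observe that, directly from the definition of adjacency, an edge of $P(G,k)$ is present precisely when there are distinct vertices $x,y$ with $x^k=y$ or $y^k=x$; since the two clauses are symmetric in $x$ and $y$, it suffices to track the single clause $x^k=y$. Hence $P(G,k)$ contains at least one edge if and only if there is some $x\in G$ with $x^k\neq x$, in which case setting $y:=x^k$ produces an adjacent pair of distinct vertices.

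Next I would argue the two directions. For the forward direction, suppose $P(G,k)$ is empty; then no element can have a $k$-th power different from itself, so $x^k=x$ for every $x\in G$. For the converse, if $x^k=x$ for all $x$, then for any two distinct vertices $x,y$ we have $x^k=x\neq y$ and $y^k=y\neq x$, so neither adjacency clause is ever satisfied and the graph has no edges. This establishes the equivalence that $P(G,k)$ is empty if and only if $x^k=x$ for every $x\in G$.

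Finally I would convert the identity $x^k=x$ into the order condition: $x^k=x$ is equivalent to $x^{k-1}=e$, which holds exactly when $o(x)\mid k-1$. Quantifying this equivalence over all $x\in G$ yields the statement. There is no genuine obstacle here, as the result is a direct unwinding of the adjacency definition (and indeed the preceding remark already records the key observation $x^k=x \iff o(x)\mid k-1$); the only point requiring care is handling the disjunction in the adjacency condition symmetrically, so that emptiness is captured by the single uniform condition $x^k=x$ rather than by treating $x^k=y$ and $y^k=x$ as separate cases.
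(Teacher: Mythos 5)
Your proof is correct and takes essentially the same route as the paper, which derives the theorem immediately from the observation that $o(x)\mid k-1$ for all $x\in G$ is equivalent to $x^k=x$ for all $x$, i.e.\ every vertex being isolated. Your write-up is just a more careful unwinding of the same idea, handling the disjunction in the adjacency condition explicitly.
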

We have already seen that $P(G,k)$ need not be connected, and if $P(G,k)$ is connected, then it is a tree. The following theorem characterizes groups whose $k-$power graphs are forests.
\begin{theorem}
Let $G$ be a finite group. $P(G,k)$ is a forest if and only if there exist no element in $G$ of order $m>1$ such that $gcd(k,m)=1$ and $ord_m(k)>2$.
\end{theorem}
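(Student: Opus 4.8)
The plan is to reinterpret $P(G,k)$ through the \emph{functional digraph} of the $k$-th power map $f\colon G\to G$, $f(a)=a^k$. In this digraph every vertex has out-degree exactly $1$ (the single arc $a\to a^k$), so each weakly connected component consists of a unique directed cycle with rooted trees hanging off it. The graph $P(G,k)$ is recovered from this digraph by forgetting orientations, deleting the loops coming from fixed points $a^k=a$, and merging each antiparallel pair $a\to a^k$, $a^k\to a$ (a directed $2$-cycle) into a single undirected edge. First I would record this correspondence precisely.

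Next I would prove the structural lemma that a component of $P(G,k)$ contains an (undirected) cycle if and only if its directed cycle has length at least $3$. This is an edge count: on a component with $V$ vertices whose directed cycle has length $c$, the $V$ arcs yield $V-1$ undirected edges when $c\le 2$ (one loop is deleted if $c=1$, one antiparallel pair is merged if $c=2$), giving a tree; whereas when $c\ge 3$ all $V$ arcs survive as distinct edges, producing exactly one cycle. Consequently $P(G,k)$ is a forest if and only if $f$ has no directed cycle of length $\ge 3$.

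It then remains to translate ``directed cycle of length $\ge 3$'' into the stated order condition. An element $a$ lies on a directed cycle exactly when it is a periodic point, i.e.\ $a^{k^\ell}=a$ for some $\ell\ge 1$; setting $m=o(a)$ this reads $m\mid k^\ell-1$, which is solvable precisely when $k$ is invertible modulo $m$, that is $gcd(k,m)=1$. In that case the least such $\ell$ equals $ord_m(k)$, and I would verify that the orbit $a,a^k,\dots,a^{k^{ord_m(k)-1}}$ consists of exactly $ord_m(k)$ distinct elements, so that this is the length of the directed cycle through $a$. Hence a directed cycle of length $\ge 3$ exists if and only if some $a$ has $gcd(k,o(a))=1$ and $ord_{o(a)}(k)>2$ (the identity, with $m=1$, is a fixed point and never contributes), which is exactly the negation of the forest condition.

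The main obstacle is the distinctness claim in the last step: showing the cycle through $a$ has length \emph{exactly} $ord_m(k)$ rather than some proper divisor. From $a^{k^i}=a^{k^j}$ with $0\le i<j$ one gets $m\mid k^i(k^{j-i}-1)$, and it is precisely the coprimality $gcd(k,m)=1$ that lets me cancel $k^i$ and conclude $k^{j-i}\equiv 1\pmod m$, forcing $j-i\ge ord_m(k)$. The remaining care is bookkeeping in the functional-graph lemma --- checking that the trees attached to a cycle never create additional cycles and that the $ord_m(k)=2$ case collapses to a single edge rather than a genuine $2$-cycle --- both of which follow from the out-degree-one structure of $f$.
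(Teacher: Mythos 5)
Your proof is correct, and it takes a more structural route than the paper's. The paper argues directly on cycles: for sufficiency it invokes Euler's theorem to exhibit the cycle $x, x^k, x^{k^2},\dots,x^{k^l}=x$ from an element of order $m$ with $ord_m(k)=l>2$; for necessity it asserts that any cycle in $P(G,k)$ has the form $x,x^k,\dots,x^{k^l}=x$, whence $o(x)\mid k^l-1$ and $\gcd(k,o(x))=1$. That assertion is exactly where the paper is thinnest: it never justifies why an arbitrary undirected cycle must be consistently oriented along the power map, nor why the cycle length equals $ord_m(k)$ rather than a proper multiple of it, nor why the $ord_m(k)=2$ case degenerates. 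Your functional-digraph framing supplies all three: the out-degree-one structure forces every weak component to carry a unique directed cycle, so antiparallel pairs (directed $2$-cycles) and loops can only be that core cycle, and your edge count ($V$ arcs on $V$ vertices, with one deletion or merge when $c\le 2$) cleanly shows a component is a tree iff $c\le 2$ and unicyclic otherwise; your cancellation argument $m\mid k^i(k^{j-i}-1)\Rightarrow k^{j-i}\equiv 1 \pmod m$ pins the cycle length to exactly $ord_m(k)$, closing the gap the paper leaves open in both directions. What the paper's approach buys is brevity; what yours buys is rigor plus a stronger byproduct (every component of $P(G,k)$ is a tree or unicyclic, generalizing the spirit of the paper's Proposition on components when $\gcd(n,k)=1$ to arbitrary finite groups). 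One small point of bookkeeping worth making explicit if you write this up: deleting loops and merging antiparallel pairs does not change the partition into connected components, so components of $P(G,k)$ really do correspond to weak components of the digraph.
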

\begin{proof}
By Euler's theorem in number theory, if $m$ and $k$ are co-prime integers, then $a^{\phi(m)}\equiv 1 (mod \, m)$. Hence, if there exists an element $x$ in $G$ of order $m $ such that $gcd(m,k)=1$ and $ord_{m}(k)=l>2$, then $x^{k^l}=x$, and $x,x^k, x^{k^2},...x^{k^l}$ is a cycle in $P(G,k)$ of length $l$ and hence $P(G,k)$ is not a forest.

Conversely, if $P(G,k)$ is not a forest, then there is a cycle $x,x^k,x^{k^2},.. x^{k^l}=x$ in $P(G,k)$, which implies $o(x) \mid k^l-1$. Hence $k^{l}\equiv 1 (mod \, o(x))$ which holds only if $gcd(k,o(x))=1$.
\end{proof}

\section{Cyclic groups}
Let $gcd(n,k)=d$, then the congruent relation $kx\equiv a(mod\, n)$ has at most $d$ solutions. Hence, if $G$ is a cyclic group of order $n$, then the maximum degree of its $k-$power graph is at most $d+1$. Then following theorem describes degree of each element in a cyclic group.
\begin{theorem}
Consider the cyclic group $\mathbb{Z}_n=\{0,1,2,...n-1\}$. Let $d=(n,k)$ and $a \in \mathbb{Z}_n$. Then,

If $d\nmid a$,  $deg(a)=\begin{cases}0& \text{if} \,\,   o(a)\mid k-1\\ \,  1& \text{if} \, \,   o(a)\nmid k-1
\end{cases}$

If $d \mid a$, $deg(a)=\begin{cases}d-1& \text{if}\, \, ka\equiv a(mod\, n)\, \, \text{and}\, \, \, o(a)\mid k-1\\
d& \text{if}\, \, ka\not\equiv a(mod\, n)\, \, and\, \, \, o(a)\mid k-1\\
d&\text{if}\, \, ka\equiv a(mod\, n)\, \, \text{and}\, \, \, o(a)\nmid k-1\\
d+1&\text{if}\, \, ka\not\equiv a(mod\, n)\, \, \text{and}\, \, \, o(a)\nmid k-1
\end{cases}$
\end{theorem}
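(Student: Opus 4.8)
The plan is to view $P(\mathbb{Z}_n,k)$ through the map $f:\mathbb{Z}_n\to\mathbb{Z}_n$ defined by $f(a)=ka\bmod n$, since by definition $a\sim b$ holds exactly when $f(a)=b$ or $f(b)=a$. Thus the neighbourhood of $a$ is $N(a)=\bigl(\{f(a)\}\cup f^{-1}(a)\bigr)\setminus\{a\}$ and $\deg(a)=|N(a)|$. First I would record two elementary congruence facts that govern every case: $f(a)=a\iff(k-1)a\equiv0\pmod n\iff o(a)\mid k-1$, and $f\bigl(f(a)\bigr)=a\iff k^2a\equiv a\pmod n\iff o(a)\mid k^2-1$, using that in $\mathbb{Z}_n$ one has $ma\equiv0\pmod n$ iff $o(a)\mid m$.

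Next I would count the set of in-neighbours $f^{-1}(a)$, i.e.\ the solutions $b$ of the linear congruence $kb\equiv a\pmod n$. As already noted before the theorem, this congruence is solvable iff $d=\gcd(n,k)$ divides $a$, in which case it has exactly $d$ solutions; this produces the top-level split $d\nmid a$ versus $d\mid a$. When $d\nmid a$ the set $f^{-1}(a)$ is empty, so the only candidate neighbour is the single out-neighbour $f(a)$, which is a genuine vertex iff $f(a)\neq a$. By the first recorded fact this gives $\deg(a)=0$ when $o(a)\mid k-1$ and $\deg(a)=1$ otherwise, settling the $d\nmid a$ line of the statement.

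The substantive case is $d\mid a$, where $f^{-1}(a)$ has $d$ elements and we must adjoin the out-neighbour $f(a)$ while deleting every coincidence. Here I would commit to the set expression $\bigl|(\{f(a)\}\cup f^{-1}(a))\setminus\{a\}\bigr|$ and read off membership from the two congruence conditions: the vertex $a$ lies in $\{f(a)\}\cup f^{-1}(a)$ precisely when $o(a)\mid k-1$ (then $f(a)=a$ and $a\in f^{-1}(a)$ at once), and the out-neighbour $f(a)$ already sits inside $f^{-1}(a)$ precisely when $o(a)\mid k^2-1$. Combining these, and using $k-1\mid k^2-1$ so that $o(a)\mid k-1$ forces $o(a)\mid k^2-1$, yields three outcomes: $\deg(a)=d-1$ when $o(a)\mid k-1$; $\deg(a)=d$ when $o(a)\nmid k-1$ but $o(a)\mid k^2-1$; and $\deg(a)=d+1$ when $o(a)\nmid k^2-1$.

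The step I expect to be the main obstacle is exactly this bookkeeping of overlaps in the case $d\mid a$: one must avoid both double-counting the out-neighbour when it coincides with an in-neighbour and counting $a$ as its own neighbour, and these two corrections are controlled by two genuinely different conditions, $o(a)\mid k^2-1$ and $o(a)\mid k-1$. The cleanest safeguard is to resolve everything through the single set-cardinality computation above rather than by verbal case-splitting. Finally I would translate the conditions back into the congruence forms $ka\equiv a$ and $k^2a\equiv a$ to match the statement, noting that the equivalence $ka\equiv a\iff o(a)\mid k-1$ shows the governing hypotheses are really the pair $o(a)\mid k-1$ and $o(a)\mid k^2-1$.
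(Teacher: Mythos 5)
Your set-theoretic computation is correct, and it follows the same basic route as the paper: count the $d$ solutions of the linear congruence $kx\equiv a\pmod n$ (none when $d\nmid a$), adjoin the out-neighbour $ka$, and correct for coincidences. The difference is that your bookkeeping is more careful than the paper's, and this matters: the paper's proof splits on $ka\equiv a\pmod n$ versus $o(a)\mid k-1$ as if these were independent conditions, whereas (as you observe at the end) they are \emph{equivalent} in $\mathbb{Z}_n$, since $ka\equiv a\pmod n$ iff $(k-1)a\equiv 0\pmod n$ iff $o(a)\mid k-1$. Consequently two of the four cases in the printed statement are vacuous, and, more seriously, the paper's proof omits exactly the overlap you isolate: in its case $ka\not\equiv a$ it asserts $deg(a)=d+1$ without checking whether the out-neighbour $ka$ is itself one of the $d$ in-neighbours, which happens precisely when $o(a)\mid k^2-1$. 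Your trichotomy --- $deg(a)=d-1$ if $o(a)\mid k-1$; $deg(a)=d$ if $o(a)\mid k^2-1$ but $o(a)\nmid k-1$; $deg(a)=d+1$ if $o(a)\nmid k^2-1$ --- is the correct statement. A concrete check: in $P(\mathbb{Z}_3,2)$ the vertex $a=1$ has $d=1$, $ka\not\equiv a$ and $o(a)=3\nmid k-1$, so the printed theorem predicts degree $d+1=2$, yet the arcs $1\mapsto 2$ and $2\mapsto 4\equiv 1$ coincide into a single edge and $deg(1)=1$ (here $o(a)=3\mid k^2-1=3$); similarly $a=2$ in $P(\mathbb{Z}_8,3)$ has degree $1$, not $2$.

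The one weak point in your write-up is therefore the final step, where you propose to ``translate the conditions back \ldots to match the statement'': this cannot literally succeed, because the statement's two governing conditions collapse into one, and its fourth line is false as written. You have in effect proved a corrected version of the theorem, and your correction is consistent with the rest of the paper: the edge-count theorem uses $k_2=gcd(k^2-1,n)$ precisely to handle coinciding edges, and the proposition on components of $P(\mathbb{Z}_n,k)$ allows $K_2$ components, which arise exactly in your middle case $o(a)\mid k^2-1$, $o(a)\nmid k-1$. (A further small degeneracy you could note: $o(a)\mid k-1$ forces $d\mid a$, since any prime power dividing both $n$ and $k$ is coprime to $k-1$ and so must divide $a$; hence in the branch $d\nmid a$ the degree is always $1$.) So rather than a gap in your argument, your proposal exposes a gap in the paper's own case analysis.
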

\begin{proof}
If $d\nmid a$, the congruence relation $kx\equiv a(mod\, n)$ has no solution. Therefore, the vertex $a$ is adjacent to $a^k$ only. In this case, if $o(a)\mid k-1$, then $a^k=a$ and $deg(a)=0$ and if $o(a) \nmid k-1$, then $a^k\neq a$ and $deg(a)=1$.

Next suppose $d\mid a$. Then the congruence relation $kx\equiv a(mod\, n)$ has exactly $d$ solutions. Now we have two cases,\\
\textbf{case 1:} $ka\equiv a(mod\, n)$\\
In this case, $a$ itself is a solution of the congruence relation. Hence $deg(a)=d-1$ if $o(a)\mid k-1$ and $deg(a)=d$ if $o(a)\nmid k-1$.\\
\textbf{case 2:} $ka\not\equiv a(mod\, n)$\\
Here, all the $d$ solutions of the congruence relation are different from $a$, Hence $deg(a)=d$ if $o(a)\mid k-1$ and $deg(a)=d+1$ if $o(a)\nmid k-1$.
\end{proof}

For a positive integer $m$, let $\pi(m)=\{p: p\mid m, p\, \text{is a prime}\}$. The following theorem gives another characterization for $P(\mathbb{Z}_n,k)$ to be connected.

\begin{theorem}
$P(\mathbb{Z}_n,k)$ is connected if and only if $\pi(n) \setminus \pi(k)=\phi$.
\end{theorem}
\begin{proof}
Suppose $\pi(n)\setminus\pi(k)\neq \phi$, and let $p\in \pi(n)$ such that $p\notin \pi(k)$. Then $gcd(n,k)=1$ and $k^{p-1}\equiv 1(mod\, p).$ Let $m=exp_p(k)$ and let $x$ be an element in $G$ such that $o(x)=p$. Then $x^{k^m}=x$ and $x$ is not connected to $e$ through any path in $P(\mathbb{Z}_n,k)$. Hence $P(\mathbb{Z}_n,k)$ is not connected.

Conversely suppose $\pi(n)\setminus\pi(k)=\phi$. Let $n=p_1^{\alpha_1}p_2^{\alpha_2}...p_s^{\alpha_s}$ and $k=p_1^{\beta_1}p_2^{\beta_2}...p_r^{\beta_r}$ be the prime factorization of $n$ and $k$ respectively, where $r\leq s$. Let $m=lcm(\beta_1,\beta_2,...\beta_r)$. Then $n\mid k^m$, and hence by Proposition 3, $P(\mathbb{Z}_n,k)$ is connected.
\end{proof}

 The identity element $e$ is an isolated vertex in $P(\mathbb{Z}_n,k)$ if $gcd(n,k)=1$ since, if $x^k=e$ for some $x\in \mathbb{Z}_n$ then $o(x)\mid k$ and hence $o(x)\mid gcd(n,k)$.
\begin{proposition}
Let $gcd(n,k)=1$. If $x\sim y$ in $P(\mathbb{Z}_n,k)$, then $o(x)=o(y)$.
\end{proposition}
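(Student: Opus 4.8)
The plan is to reduce to a single case using the symmetry of the adjacency relation, and then to invoke the standard formula for the order of a power of a group element. First I would observe that $x\sim y$ means $x^k=y$ or $y^k=x$; since these two cases are interchanged by swapping the roles of $x$ and $y$, it suffices to treat $y=x^k$ and to show that $o(x^k)=o(x)$.

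Next, set $m=o(x)$. Because $x\in\mathbb{Z}_n$, Lagrange's theorem gives $m\mid n$, and then the hypothesis $\gcd(n,k)=1$ forces $\gcd(m,k)=1$, since any common divisor of $m$ and $k$ would also divide $n$ and $k$ and hence equal $1$.

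The key step is then the elementary fact that in any group $o(x^k)=o(x)/\gcd(o(x),k)$. Applying it with $\gcd(m,k)=1$ yields $o(x^k)=m/1=m=o(x)$, which is exactly the claim.

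I do not expect any serious obstacle here; the only points requiring care are the symmetry reduction and the transfer of coprimality from the pair $(n,k)$ to the pair $(m,k)$ via $m\mid n$. If one prefers to avoid quoting the order formula, I would instead argue directly from $\gcd(m,k)=1$: choose an integer $k'$ with $kk'\equiv 1\pmod m$, so that $(x^k)^{k'}=x^{kk'}=x$. This exhibits $x$ as a power of $x^k$, while $x^k$ is visibly a power of $x$, so the two elements generate the same cyclic subgroup of $\mathbb{Z}_n$ and therefore have equal order.
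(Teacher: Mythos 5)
Your proof is correct, and it takes a genuinely different route from the paper's. The paper argues additively inside $\mathbb{Z}_n$: writing the adjacency as $kx\equiv y\pmod n$, it multiplies this congruence first by $o(x)$ to get $o(x)y\equiv 0\pmod n$, hence $o(y)\mid o(x)$, and then by $o(y)$ to get $o(y)kx\equiv 0\pmod n$, hence $o(x)\mid k\,o(y)$ and so $o(x)\mid o(y)$ since $\gcd(o(x),k)=1$ --- in effect reproving by hand, via congruences mod $n$, exactly the divisibility content of the power-order formula you quote. Your version packages this into the standard lemma $o(x^k)=o(x)/\gcd(o(x),k)$, or equivalently the observation that an inverse $k'$ of $k$ modulo $o(x)$ gives $(x^k)^{k'}=x$ and hence $\langle x^k\rangle=\langle x\rangle$; both routes hinge on the same coprimality transfer ($o(x)\mid n$ and $\gcd(n,k)=1$ imply $\gcd(o(x),k)=1$), which the paper uses without comment and you spell out. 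What your approach buys: it is shorter, it nowhere uses that the group is cyclic --- so it proves the proposition for any finite group of order $n$ with $\gcd(n,k)=1$ --- and your second variant establishes the strictly stronger conclusion $\langle x\rangle=\langle y\rangle$, which is precisely the fact exploited in the subsequent component-counting theorem for $P(\mathbb{Z}_n,k)$. What the paper's computation buys is self-containedness: it needs nothing beyond manipulating congruences. You also make the symmetry reduction explicit, whereas the paper silently treats only the case $kx\equiv y\pmod n$.
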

\begin{proof}
Let $x\sim y$, then $kx\equiv y (mod\, n) \implies o(x)kx\equiv o(x)y(mod \, n) \implies 0\equiv o(x)y(mod\, n) \implies o(y)\mid o(x)$. Also $o(y)kx\equiv o(y)y(mod\, n) \implies o(y)kx\equiv 0(mod\, n) \implies o(x)\mid o(y)k\implies o(x)\mid o(y)$, since $gcd(o(x),k)=1$.
\end{proof}

The following theorem states that the number of connected components is bounded above.
\begin{theorem}
If $gcd(n,k)=1$, then the number of connected components of $P(\mathbb{Z}_n,k)$, $c(P(\mathbb{Z}_n,k))\geq \tau(n)$, where $\tau(n)$ is the number of divisors of $n$, and $c(P(\mathbb{Z}_n,k))=\tau(n)$ if and only if $k$ is a primitive root modulo $d$, for every divisor $d$ of $n$.
\end{theorem}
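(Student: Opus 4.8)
The plan is to exploit the preceding proposition, which asserts that when $gcd(n,k)=1$ adjacent vertices of $P(\mathbb{Z}_n,k)$ have equal order. Consequently every connected component is contained in a single \emph{order class}, namely the set $V_d$ of elements of a fixed order $d\mid n$. Since each divisor $d$ of $n$ is realised as the order of some element (there are $\phi(d)\geq 1$ of them), the vertex set partitions into exactly $\tau(n)$ nonempty classes $V_d$, and no edge joins two different classes. This immediately forces $c(P(\mathbb{Z}_n,k))\geq \tau(n)$, establishing the inequality.

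For the equality criterion I would analyse each class $V_d$ separately. Writing $\mathbb{Z}_n$ additively, the $k$-power map sends $a$ to $ka\pmod n$, and within $V_d$ the neighbours of $a$ are precisely $ka$ and $k^{-1}a$ (the latter exists since $gcd(d,k)=1$). Hence the component of $a$ is its orbit under the permutation $a\mapsto ka$ of $V_d$. Identifying the elements of order $d$ with the units of $\mathbb{Z}/d\mathbb{Z}$ via $a=(n/d)m$ with $gcd(m,d)=1$, this permutation becomes multiplication by $k$ on $(\mathbb{Z}/d\mathbb{Z})^*$; every orbit has length $ord_d(k)$, so $V_d$ splits into $\phi(d)/ord_d(k)$ components. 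Summing over divisors gives
\[
c(P(\mathbb{Z}_n,k))=\sum_{d\mid n}\frac{\phi(d)}{ord_d(k)} .
\]
Each summand is at least $1$, with equality exactly when $ord_d(k)=\phi(d)$, i.e. when $k$ is a primitive root modulo $d$. Therefore the total equals $\tau(n)$ if and only if $k$ is a primitive root modulo every divisor $d$ of $n$.

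The main obstacle is the middle step: verifying that the restriction of $P(\mathbb{Z}_n,k)$ to $V_d$ is precisely the undirected functional graph of multiplication by $k$ on the units modulo $d$, so that its components correspond to the cycles of that permutation. This requires confirming that the adjacency rule introduces no edges within $V_d$ beyond $a\sim ka$—in particular that the disjunction in the definition of adjacency never merges distinct orbits—and dispatching the degenerate cases $d=1$ and $d=2$, where $\phi(d)=1$ and $V_d$ is a single isolated vertex, consistent with $k$ being trivially a primitive root. Once the orbit-counting identity is in hand, both the inequality and the equality condition follow at once.
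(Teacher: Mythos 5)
Your proposal is correct and follows essentially the same route as the paper: both use the preceding proposition to confine each component to an order class $V_d$ (giving $c\geq\tau(n)$), and both rest on the fact that the component of an order-$d$ element is the cycle $x,x^k,x^{k^2},\dots$ of length $ord_d(k)$, so that $V_d$ is a single component exactly when $ord_d(k)=\phi(d)$. Your exact count $c(P(\mathbb{Z}_n,k))=\sum_{d\mid n}\phi(d)/ord_d(k)$ is a slightly sharper packaging of the paper's two-directional argument (and it checks out against the paper's example $P(\mathbb{Z}_{31},2)$, where it gives $1+30/5=7$ components), but it is not a different method.
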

\begin{proof}
By Proposition $3.4$, all the vertices in a component have same order. Hence, the inequality follows since an element of order $d$ exists in $\mathbb{Z}_n$ if $d\mid n$.

Suppose $k$ is a primitive root modulo $d$, for every $d\mid n$. Then $ord_d(k)=\phi(d)$. Hence, if $x$ is an element in $G$ of order $d$, then $\phi(d)$ is the smallest positive integer such that $x^{k^{\phi(d)}}\equiv x(mod\, d)$. Hence $x,x^k,x^{k^2},...,x^{k^{\phi(d)-1}}$ are the distinct $\phi(d)$ elements of order $d$ in $G$ and are in the same component of $P(\mathbb{Z}_n,k)$.

Conversely, suppose $k$ is not a primitive root modulo $d$ for some $d \mid n$. Then, $ord_d(k)=m<\phi(d)$ and hence $k^m\equiv 1(mod\, d)$ and $x^{k^m}=x$ for every element $x$ of order $d$. Hence, the component containing $x$ has exactly $m$ vertices. Therefore, the elements in $\mathbb{Z}_n$ of order $d$ contribute at least two components.
\end{proof}
\begin{proposition}\cite{chattopadhyay2017some}
If $gcd(n,k)=1$ then any component of  $P(\mathbb{Z}_n,k)$ is an isolated vertex, the complete graph $K_2$ or a cycle of length at least 3.
\end{proposition}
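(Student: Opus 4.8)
The plan is to exploit the fact that, since $\gcd(n,k)=1$, the map $\psi\colon \mathbb{Z}_n \to \mathbb{Z}_n$ given by $\psi(x)=kx \pmod n$ is a bijection, hence a permutation of the vertex set. First I would rewrite the adjacency rule in additive notation: ``$x^k=y$ or $y^k=x$'' becomes ``$\psi(x)=y$ or $\psi(y)=x$''. Consequently the neighbours of a vertex $x$ are exactly the (at most two) elements $\psi(x)$ and $\psi^{-1}(x)$, whenever these differ from $x$. In other words, $P(\mathbb{Z}_n,k)$ is the undirected graph obtained from the functional digraph of the permutation $\psi$ by forgetting orientations and discarding loops.

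Next I would pass to the disjoint-cycle decomposition of $\psi$. Since $\psi$ is a permutation, every vertex lies in a unique $\psi$-cycle, and $\psi$ carries each cycle onto itself; therefore no edge can join vertices belonging to two different cycles. This shows that the connected components of $P(\mathbb{Z}_n,k)$ are precisely the vertex sets of the cycles of $\psi$, and it remains only to identify the subgraph induced on each cycle. Note that within a cycle a vertex $x$ is adjacent solely to its successor $\psi(x)$ and its predecessor $\psi^{-1}(x)$, so no chords arise.

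I would then argue by the length $\ell$ of a cycle $x,\psi(x),\dots,\psi^{\ell-1}(x)$. For $\ell=1$ the element is fixed, $\psi(x)=x$, and $x$ is an isolated vertex. For $\ell=2$ we have $\psi(x)=y$ and $\psi(y)=x$ with $x\neq y$, and the forward arc and backward arc collapse to the single undirected edge $\{x,y\}$, so the component is $K_2$. For $\ell\geq 3$ the two neighbours $\psi(x)$ and $\psi^{-1}(x)$ of each vertex are distinct (because $\psi^2(x)\neq x$), so every vertex has degree $2$; as a connected graph all of whose vertices have degree $2$ is a cycle, the component is a cycle of length $\ell$, namely $x,x^k,x^{k^2},\dots,x^{k^{\ell-1}}$.

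The part I expect to need the most care is the case $\ell=2$: one must observe that the two oriented arcs of a transposition give only one undirected edge, so a $\psi$-cycle of length $2$ yields $K_2$ rather than a genuine graph cycle. This is exactly why the statement lists $K_2$ separately and restricts cycles to length at least $3$. Once this collapsing is handled correctly, the trichotomy follows directly from the cycle structure of $\psi$, and it is consistent with the order-preserving Proposition~3.4, since all vertices of a given component necessarily share the same order.
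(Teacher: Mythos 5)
Your proof is correct, but note that there is nothing in the paper to compare it against: the proposition is stated with a citation to \cite{chattopadhyay2017some} and no proof is given in this paper. Your standalone argument is sound and complete: since $\gcd(n,k)=1$ the map $\psi(x)=kx \pmod n$ is a permutation of $\mathbb{Z}_n$, every edge of $P(\mathbb{Z}_n,k)$ joins a vertex to $\psi(x)$ or $\psi^{-1}(x)$, so components are exactly the $\psi$-orbits, and the trichotomy by orbit length $\ell=1$, $\ell=2$, $\ell\geq 3$ yields an isolated vertex, $K_2$, or a graph cycle of length $\ell$ respectively. You also handle the one genuinely delicate point correctly: for $\ell=2$ the two arcs $\psi(x)=y$ and $\psi(y)=x$ collapse to a single undirected edge, giving $K_2$ rather than a $2$-cycle, and for $\ell\geq 3$ the neighbours $\psi(x)$ and $\psi^{-1}(x)$ are distinct because $\psi^2(x)\neq x$, so every vertex has degree exactly $2$ and the component is an honest cycle. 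This orbit-decomposition argument is the natural route and is consistent with how the paper uses the result (e.g., in the proof of the subsequent theorem characterizing when this component structure forces $\gcd(n,k)=1$).
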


The following theorem shows that the converse of the Proposition 3 is also satisfies.
\begin{theorem}
Any component of $P(\mathbb{Z}_n,k)$
 is an isolated vertex, the complete graph $K_2$ or a cycle of length at least $3$ if and only if $gcd(n,k)=1$.
\end{theorem}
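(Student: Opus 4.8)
The reverse implication, that $\gcd(n,k)=1$ forces every component to be an isolated vertex, $K_2$, or a cycle of length at least $3$, is exactly the preceding Proposition, so I would only establish the forward implication, and I would do so by contraposition: assuming $d:=\gcd(n,k)\geq 2$, I would exhibit a component of $P(\mathbb{Z}_n,k)$ that is none of the three permitted shapes. The useful reformulation is that each permitted component has all degrees at most $2$ and is not a path on three or more vertices; hence it is enough either to produce a single vertex of degree $\geq 3$ or to produce a component isomorphic to $P_3$.

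My tool for this is the degree theorem for $\mathbb{Z}_n$ proved above, which says that a vertex $a$ with $d \mid a$ has degree $d-1$, $d$, or $d+1$, the extreme value $d+1$ being attained precisely when $ka\not\equiv a\pmod n$. I would argue according to the size of $d$. If $d\geq 4$, the identity $0$ is fixed and retains $d-1\geq 3$ preimages under $x\mapsto kx$, so $\deg(0)=d-1\geq 3$ and its component is already forbidden. If $d=3$, I would look for a multiple $a$ of $3$ with $ka\not\equiv a\pmod n$; such an $a$ has degree $4$. Should no such $a$ exist, every multiple of $3$ is fixed, and then $0$ carries exactly $d-1=2$ pendant preimages and sits at the centre of a $P_3$, which is again forbidden. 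Thus for $d\geq 3$ the contrapositive goes through.

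The step I expect to be the real obstacle is the remaining case $d=2$, and within it the degenerate configuration in which every even element is fixed by $x\mapsto kx$, that is $n\mid 2(k-1)$. There the degree theorem supplies only degree $d-1=1$ at each fixed vertex, each fixed vertex attracts a single pendant, and the graph threatens to dissolve into copies of $K_2$ --- so neither a degree-$\geq 3$ vertex nor a $P_3$ is available, and the two mechanisms above both fail. Deciding this configuration cleanly is the crux: one must analyse, purely in terms of the arithmetic of $n$ and $k$, whether $\gcd(n,k)=2$ together with $n\mid 2(k-1)$ can genuinely occur, and what the resulting graph looks like when it does. I would concentrate essentially all of the effort here, since this is where the argument either closes or forces an additional hypothesis on the pair $(n,k)$.
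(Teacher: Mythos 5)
Your instinct about the leftover $d=2$ configuration is exactly right, and it is worth saying plainly: that case cannot be closed, because the theorem as printed is false there. Suppose $\gcd(n,k)=2$ and $n\mid 2(k-1)$. Since $k$ is even, $k-1$ is odd, so $\frac{n}{2}$ divides the odd number $k-1$; this forces $\frac{n}{2}$ odd and $k\equiv \frac{n}{2}+1 \pmod n$. Then every even $a$ is fixed by $x\mapsto kx$ and its unique other preimage is the odd vertex $a+\frac{n}{2}$, while every odd $x$ satisfies $kx\equiv x+\frac{n}{2}\pmod n$ and has no preimages at all (an odd element has no even multiple of $k$ mapping to it). Hence $P(\mathbb{Z}_n,k)$ is the perfect matching $\frac{n}{2}K_2$: every component is a $K_2$, yet $\gcd(n,k)=2$. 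The smallest instance is $(n,k)=(6,4)$, with edges $\{1,4\}$, $\{3,0\}$, $\{5,2\}$. The paper even proves this family itself: the theorem immediately following this one shows $P(\mathbb{Z}_n,\frac{n}{2}+1)=\frac{n}{2}P_2$ for $n$ even with $\frac{n}{2}$ odd, where $\gcd(n,\frac{n}{2}+1)=2$, so the paper contradicts the present theorem two results later. The corrected statement carries precisely the extra hypothesis you predicted: every component is an isolated vertex, a $K_2$, or a cycle of length at least $3$ if and only if $\gcd(n,k)=1$, or $\gcd(n,k)=2$ and $n\mid 2(k-1)$.

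For comparison, the paper's own proof avoids your case split on $d$ entirely and is much shorter --- but it fails at exactly the point you flagged. It argues that for $d=\gcd(n,k)>1$ the congruence $kx\equiv 1\pmod n$ has no solution, so $\deg(1)=1$ with sole neighbour $k$, while $kx\equiv k\pmod n$ has $d$ solutions and therefore $\deg(k)\geq 2$, whence the component containing the edge $\{1,k\}$ is neither a $K_2$ nor a cycle. The flaw is the step $\deg(k)\geq 2$: the $d$ solutions form the coset $1+\langle n/d\rangle$, and when $d=2$ and $k\equiv\frac{n}{2}+1$ one of the two solutions is $k$ itself, which contributes no edge; then $\deg(k)=1$ and the component of $\{1,k\}$ genuinely is a $K_2$. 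Your plan --- $\deg(0)=d-1\geq 3$ for $d\geq 4$; for $d=3$ either an unfixed multiple of $3$ of degree $d+1$ or a $P_3$ centred at $0$ (note that if all multiples of $3$ were fixed and $9\mid n$, then $n/3$ would be simultaneously fixed and mapped to $0$, a contradiction, so the two preimages $n/3$, $2n/3$ of $0$ are indeed pendant); for $d=2$ either an unfixed even vertex of degree $3$ or the degenerate matching --- is sound in every branch you carried out, and it isolates the exception that the paper's argument silently steps over. One simplification available to you: in the degree theorem the conditions $ka\equiv a\pmod n$ and $o(a)\mid k-1$ are equivalent (both say $n\mid(k-1)a$), so for $d\mid a$ only the degrees $d-1$ and $d+1$ actually occur, which makes your dichotomies automatic.
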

\begin{proof}
Suppose $gcd(n,k)=d>1$. Then, the congruence $kx\equiv 1(mod\, n)$ has no solutions. Considering $1$ and $k$ as elements of $\mathbb{Z}_n$, $1 \sim k$ in $P(\mathbb{Z}_n,k)$. Hence, $deg(1)=1$. Also, $kx\equiv k(mod\, n)$ has $d$ solutions and so $deg(k)\geq2$. Therefore, the edge between $1$ and $k$ in $P(\mathbb{Z}_n,k)$ is not a part of a cycle nor $K_2$ as a component. Hence, the necessary part follows. The sufficient part follows from Proposition $14$.
\end{proof}

\begin{theorem}
Let $n$ be even and $\frac{n}{2}$ odd, then $P(\mathbb{Z}_n,\frac{n}{2})=2K_{1,\frac{n}{2}-1}$ and $P(\mathbb{Z}_n,\frac{n}{2}+1)=\frac{n}{2}P_2$.
\end{theorem}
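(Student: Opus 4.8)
The plan is to work additively in $\mathbb{Z}_n$, writing $n=2m$ with $m$ odd, and to analyse the $k$-th power map $f(a)=ka\pmod n$ directly. The entire argument rests on one elementary computation: reducing $ma$ modulo $2m$ according to the parity of $a$. First I would record that $ma\equiv 0\pmod{2m}$ when $a$ is even and $ma\equiv m\pmod{2m}$ when $a$ is odd. This single fact drives both halves of the statement.

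For the first assertion take $k=m=\frac n2$, so $f(a)=ma$. By the parity computation $f$ sends every even element to $0$ and every odd element to $m$. I would note that the only fixed points are $0$ (even) and $m$ (which is odd, so $f(m)=m$), so these two vertices carry no loop and serve as the centres. Each of the remaining $m-1$ even elements contributes the edge $\{a,0\}$ and each of the remaining $m-1$ odd elements contributes the edge $\{a,m\}$. To finish I would check that adjacency read in the other direction produces nothing new and that no edge can join an even to an odd vertex: since $f$ maps evens to the even vertex $0$ and odds to the odd vertex $m$, an edge between opposite parities is impossible. This exhibits the graph as two disjoint stars $K_{1,m-1}$, that is $2K_{1,\frac n2-1}$, and a vertex count confirms all $n$ vertices are accounted for.

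For the second assertion take $k=m+1=\frac n2+1$, so $f(a)=(m+1)a=ma+a$. Here the parity computation gives $f(a)=a$ for even $a$ (a fixed point, hence no loop) and $f(a)=a+m\pmod{2m}$ for odd $a$. Since $m$ is odd, $a+m$ is even, so $f$ carries each odd element to an even element and is the identity on the evens. I would then argue that $a\mapsto a+m$ restricts to a bijection from the $m$ odd residues onto the $m$ even residues (it is a translation, and odd $+$ odd $=$ even), so the edges $\{a,a+m\}$ with $a$ odd form a perfect matching. The point needing care is that although every even vertex is fixed by $f$, it is not isolated: the even vertex $b$ is the image of the unique odd element $b-m$, so $\deg(b)=1$. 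Combined with the fact that the image of $f$ consists entirely of even elements (hence odd vertices receive no incoming edge and also have degree one), every vertex has degree one and the graph is $mP_2=\frac n2 P_2$.

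The main obstacle, such as it is, lies not in any hard estimate but in the bookkeeping that rules out spurious edges and secures the degree-one condition in each case; in particular, in the second part, reconciling the fact that $f$ fixes every even vertex with the conclusion that no vertex is isolated. Verifying that the two parity classes never interact (Part~1) and that the translation $a\mapsto a+m$ is a clean bijection between them (Part~2) is the crux, and once these are in hand the identifications $2K_{1,\frac n2-1}$ and $\frac n2 P_2$ follow at once. I would also dispose of the degenerate small case separately, since the formulas implicitly require $\frac n2\geq 2$ for $k$ to be an admissible exponent.
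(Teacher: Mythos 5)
Your proposal is correct and follows essentially the same route as the paper: both arguments hinge on the single parity computation that $\frac{n}{2}a\equiv 0$ or $\frac{n}{2} \pmod n$ and $(\frac{n}{2}+1)a\equiv a$ or $a+\frac{n}{2} \pmod n$ according as $a$ is even or odd, and then read off the two stars and the perfect matching. The only cosmetic difference is that the paper obtains the degrees in the first part by citing its earlier degree theorem (Theorem on $\deg(a)$ in $P(\mathbb{Z}_n,k)$) while you verify them directly from the map $a\mapsto ka$, and you additionally flag the degenerate case $\frac{n}{2}=1$, which the paper leaves implicit.
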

\begin{proof}
Let $n$ be even and $\frac{n}{2}$ odd. If $k=\frac{n}{2}$, then using Theorem $10$,
$deg(a)=\frac{n}{2}-1$ if $a=0,\frac{n}{2}$ and $deg(a)=1$ if $a\neq 0,\frac{n}{2}.$ Also, $\frac{n}{2}a\equiv 0 (mod\, n)$ if $a$ is even and hence all the even elements are adjacent to the vertex 0, and $\frac{n}{2}a\equiv \frac{n}{2}(mod\, n)$ if $a$ is odd and hence all the odd elements are adjacent to the vertex $\frac{n}{2}$.

If $k=\frac{n}{2}+1$, for $a\in \mathbb{Z}_n$, $(\frac{n}{2}+1)a\equiv a(mod\, n)$ if $a$ is even, and $(\frac{n}{2}+1)a\equiv \frac{n}{2}+a(mod\, n)$ if $a$ is odd. Hence, the edges in $P(\mathbb{Z}_n,k)$ are between the vertices $a$ and $\frac{n}{2}+a$ only.
\end{proof}
\section{Application}
Consider the directed $k$-power graph of a group $G$, $\overrightarrow P(G,k)$, defined as a digraph with vertex set $G$ in which there is an arc from a vertex $a$ to another vertex $b$ if and only if $b=a^k$. In this section, we present a riddle and solve that riddle using directed $k$-power graphs of cyclic groups.\\\\
\textbf{Shifting Chair Problem}: \textit{In a game, $n$ chairs are evenly spaced in a circle and $n$ people are assigned unique numbers from $1$ to $n$. People should sit in ascending order in the clockwise direction when the first whistle blows. For each of the whistles thereafter, the individual who is assigned with number $i$ should acquire his next chair by skipping exactly $i-1$ number of chairs in the clockwise direction. The problem is to determine the smallest number of whistles required to ensure that each chair is occupied by exactly one person.}\\

Consider the above problem, representing the movement of the individuals from their initial seats to the position after $k^{\text{th}}$ whistle by a directed edge, we get the directed $k$-power graph of $\mathbb{Z}_n$. Then the problem is to find the smallest $k>1$ such that, $od(a)=id(a)=1$ in $\overrightarrow P(\mathbb{Z}_n,k)$ for all $a\in \mathbb{Z}_n$, where $od(a)$ is the outdegree of $a$ and $id(a)$ is the indegree of $a$, and the following theorem present the solution.

\begin{theorem}
In $\overrightarrow P(\mathbb{Z}_n,k)$, $id(a)=od(a)=1$ for all $a\in \mathbb{Z}_n$ if and only if $gcd(n,k)=1$.
\end{theorem}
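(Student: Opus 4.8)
The plan is to observe first that in $\overrightarrow{P}(\mathbb{Z}_n,k)$ the $k$-th power map is a well-defined function on the additive group $\mathbb{Z}_n$: the unique arc leaving $a$ ends at the reduction of $ka$ modulo $n$, so $od(a)=1$ for every vertex $a$, independent of $k$ (here the self-arc at a fixed point, occurring when $ka\equiv a\,(mod\,n)$, is counted once toward the out-degree). Consequently the whole statement reduces to a question about in-degrees, namely whether $id(a)=1$ for all $a$. Note that $id(a)$ equals the number of $x\in\mathbb{Z}_n$ with $kx\equiv a\,(mod\,n)$, that is, the number of solutions of a linear congruence.

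The arithmetic engine is the standard lemma already invoked in the proof of Theorem $10$: writing $d=gcd(n,k)$, the congruence $kx\equiv a\,(mod\,n)$ is solvable if and only if $d\mid a$, and when solvable it has exactly $d$ solutions. For the forward implication I would assume $gcd(n,k)=1$, so $d=1$; then $d\mid a$ holds for every $a$ and the congruence has precisely one solution, giving $id(a)=1$, while $od(a)=1$ was already established. Hence $id(a)=od(a)=1$ for all $a\in\mathbb{Z}_n$.

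For the converse I would argue contrapositively. Suppose $d=gcd(n,k)>1$. Then $d\nmid 1$, so $kx\equiv 1\,(mod\,n)$ has no solution and $id(1)=0\neq 1$; alternatively, for any $a$ with $d\mid a$ the congruence has $d>1$ solutions, whence $id(a)=d\neq 1$. In either case some vertex violates $id(a)=1$, so the hypothesis $id(a)=od(a)=1$ for all $a$ forces $gcd(n,k)=1$, completing the equivalence.

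I do not anticipate a genuine obstacle, since everything rests on the solvability-and-count lemma for $kx\equiv a\,(mod\,n)$; the one point demanding care is the treatment of fixed points. Because $a=0$ always satisfies $k\cdot 0\equiv 0\,(mod\,n)$, the arc at $0$ is a loop, and the claim ``$od(a)=1$ for all $a$'' is only meaningful once we agree that such loops count once toward both $od$ and $id$; with this convention the theorem is exactly the assertion that $a\mapsto ka$ is a bijection of $\mathbb{Z}_n$ precisely when $gcd(n,k)=1$. Finally, I would note the consequence for the Shifting Chair Problem: the least admissible number of whistles is the smallest integer $k>1$ with $gcd(n,k)=1$.
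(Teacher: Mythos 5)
Your proposal is correct and follows essentially the same route as the paper: the forward direction counts solutions of $kx\equiv a\ (mod\ n)$ (exactly one when $gcd(n,k)=1$, with $od(a)=1$ automatic since the $k$-th power map is a function), and the converse uses the unsolvability of $kx\equiv 1\ (mod\ n)$ when $d=gcd(n,k)>1$ to get $id(1)=0$. Your explicit remark that fixed points such as $a=0$ produce loops which must count once toward both $od$ and $id$ actually makes precise a convention the paper leaves implicit, but it does not change the argument.
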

\begin{proof}
If $gcd(n,k)=1$, then for each $a\in \mathbb{Z}_n$, the congruence $kx\equiv a(mod\, n)$ has exactly one solution, so $id(a)=1$. Also $od(a)=1$ for all $a\in \mathbb{Z}_n$ since $a$ is adjacent to $a^k$.

Conversely suppose $gcd(n,k)=d>1$. Then $kx\equiv 1(mod\,n)$ has no solutions, so $id(1)=0$.
\end{proof}
\section{Declarations}

\textbf{Conflict of interest} On behalf of all authors, the corresponding author states that there is no conflict of interest.
\section*{Acknowledgements}
The first author gratefully acknowledges the financial support of Council of Scientific and Industrial Research, India (CSIR) (Grant No-09/874(0029)/2018-EMR-I).
The authors would like to thank the DST, Government of India, for providing support to carry out the work under the scheme `FIST' (No.SR/FST /MS-I/2019/40).




\bibliography{sn-reference}


\end{document}